\newtheorem{thm}{Theorem}[section]
\newtheorem{lem}[thm]{Lemma}
\newtheorem{coro}[thm]{Corollary}
\theoremstyle{definition}
\newtheorem{exm}[thm]{Example}
\newtheorem{rem}[thm]{Remark}
\newcommand{\lrf}[1]{\lfloor #1\rfloor}
\numberwithin{equation}{section}
\journal{JMAA}
\begin{document}

\begin{frontmatter}

\title{Asymptotic normality of Laplacian coefficients of graphs}

\author[a]{Yi Wang\corref{cor1}}
\ead{wangyi@dlut.edu.cn}
\cortext[cor1]{Corresponding authors.}
\author[b]{Hai-Xia Zhang}
\ead{zhanghaixiass@hotmail.com}
\author[c]{Bao-Xuan Zhu\corref{cor1}}
\ead{bxzhu@jsnu.edu.cn}

\address[a]{School of Mathematical Sciences, Dalian University of Technology, Dalian 116024, P.R. China}
\address[b]{Department of Mathematics, Taiyuan University of Science and Technology, Taiyuan 030024, P.R. China}
\address[c]{School of Mathematics and Statistics, Jiangsu Normal University, Xuzhou 221116, P.R. China}

\begin{abstract}
Let $G$ be a simple graph with $n$ vertices and let
$$C(G;x)=\sum_{k=0}^n(-1)^{n-k}c(G,k)x^k$$
denote the Laplacian characteristic polynomial of $G$.
Then if the size $|E(G)|$ is large compared to the maximum degree $\Delta(G)$,
Laplacian coefficients $c(G,k)$ are approximately normally distributed (by central and local limit theorems).
We show that Laplacian coefficients of the paths, the cycles, the stars, the wheels
and regular graphs of degree $d$ are approximately normally distributed respectively.
We also point out that Laplacian coefficients of the complete graphs and the complete bipartite graphs are approximately Poisson distributed respectively.
\end{abstract}

\begin{keyword}
Laplacian matrix \sep Laplacian coefficient \sep asymptotic normality \sep central and local limit theorem
\MSC[2010]  05C50 \sep 60F05 \sep 62E20
\end{keyword}

\end{frontmatter}

\section{Introduction}

Let $a(n,k)$ be a double-indexed sequence of nonnegative numbers and let
\begin{equation}\label{pnk}
p(n,k)=\frac{a(n,k)}{\sum_{j=0}^na(n,j)}
\end{equation}
denote the normalized probabilities.
Following Bender~\cite{Ben73},
we say that the sequence $a(n,k)$ is {\it asymptotically normal by a central limit theorem},
if
\begin{equation}\label{clt}
\lim_{n\rightarrow\infty}\sup_{x\in\mathbb{R}}\left|\sum_{k\le\mu_n+x\sigma_n}p(n,k)-\frac{1}{\sqrt{2\pi}}\int_{-\infty}^xe^{-t^2/2}dt\right|=0,
\end{equation}
where $\mu_n$ and $\sigma^2_n$ are the mean and variance of \eqref{pnk}, respectively.
We say that $a(n,k)$ is {\it asymptotically normal by a local limit theorem} on $\mathbb{R}$ if
\begin{equation}\label{llt}
\lim_{n\rightarrow\infty}\sup_{x\in\mathbb{R}}\left|\sigma_np(n,\lrf{\mu_n+x\sigma_n})-\frac{1}{\sqrt{2\pi}}e^{-x^2/2}\right|=0.
\end{equation}
In this case,
$$a(n,k)\sim \frac{e^{-x^2/2}\sum_{j=0}^na(n,j)}{\sigma_n\sqrt{2\pi}} \textrm{ as } n\rightarrow \infty,$$
where $k=\mu_n+x\sigma_n$ and $x=O(1)$.
Clearly, the validity of \eqref{llt} implies that of \eqref{clt}.

Let $m(G,k)$ be the number of $k$-matchings in a graph $G$.
It is well known that the matchings generating function $M(G;x)=\sum_{k\ge 0}m(G,k)x^k$ has only real zeros.
Using this fact, Godsil \cite{God81} showed that
if $|V(G)|$ is large compared to the maximum degree $\Delta(G)$ of a vertex in $G$
or $G$ is large complete graph,
then $m(G,k)$ are approximately normally distributed
(see \cite{Kah00,LPRS16,Ruc84} for some further work).

Laplacian characteristic polynomials are also
a class of graph polynomials with only real zeros.
Given a simple graph $G$ with $n$ vertices,
its Laplacian matrix $L=L(G)$ is an $n\times n$ symmetric matrix,
defined as $L(G)=D(G)-A(G)$,
where $D(G)$ is the degree matrix and $A(G)$ is the adjacency matrix of the graph $G$.
The Laplacian characteristic polynomial of $G$ is denoted by
$$C(G;x)=\det(xI-L(G))=\sum_{k=0}^n(-1)^{n-k}c(G,k)x^k.$$
Laplacian coefficients of graphs are related to various combinatorial properties of graphs.
For example, Kelmans and Chelnokov \cite{KC74} gave an interpretation of Laplacian coefficients in terms of spanning subforest:
$$c(G,k)=\sum_{E(F)=n-k}p(F),$$
where the sum is taken over all spanning forest $F$ of $G$,
and $p(F)$ is the product of the numbers of vertices in the components of $F$.
Clearly,
$$c(G,n)=1, c(G,n-1)=2|E(G)|, c(G,0)=0, c(G,1)=n\tau(G),$$
where $\tau(G)$ is the number of the spaning trees.
In particular,
if $G$ is a tree, then the Laplacian coefficient $c(G,2)$ equals to the Wiener index $W(G)$ of $G$,
which is the sum of all distance between unordered pairs of vertices of $G$
and is considered as one of the most used indices with high correlation with many physical and chemical properties of molecular compounds
\cite{DEG01}.

Let $r$ be the number of connected components of $G$.
Then $c(G,k)=0$ if and only if $k<r$.
The object of this note is to investigate asymptotic normality
of Laplacian coefficients $c(G,k)$.
Our main result is that
if $|E(G)|$ is large compared to $\Delta(G)$,
then $c(G,k)$ are approximately normally distributed.
We show that Laplacian coefficients $c(G_n,k)$ are asymptotically normal
if $G_n$ are the paths $P_n$, the cycles $C_n$, the stars $K_{1,n}$, the binary trees $B_n$, the wheels $W_n$, and the hypercubes $Q_n$.
We also pointed out that for the complete graphs $K_n$,
Laplacian coefficients $c(K_n,k)$ are not asymptotically normal.

Throughout this paper all graphs considered are finite and simple.
Graph theoretical terms used but not defined can be found in \cite{CRS10}.

\section{Main results}

A standard approach to demonstrating asymptotic normality is the following criterion,
which was used by Harper~\cite{Har67} to show asymptotic normality of the Stirling numbers of the second kind
(see \cite[Theorem 2]{Ben73} and \cite[Example 3.4.2]{Can15} for historical remarks).

\begin{lem}\label{rzv}
Suppose that $A_n(x)=\sum_{k=0}^na(n,k)x^k$ have only real zeros and $A_n(x)=\prod_{i=1}^n(x+r_i)$,
where all $a(n,k)$ and $r_i$ are nonnegative.
Let $$\mu_n=\sum_{i=1}^n\frac{1}{1+r_i}$$
and $$\sigma^2_n=\sum_{i=1}^n\frac{r_i}{(1+r_i)^2}.$$
Then if $\sigma_n\rightarrow+\infty$,
the numbers $a(n,k)$ are asymptotically normal (by central and local limit theorems)
with the mean $\mu_n$ and variance $\sigma_n^2$.
\end{lem}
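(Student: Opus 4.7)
The plan is to interpret the normalized coefficients $p(n,k) = a(n,k)/A_n(1)$ probabilistically through the factorization of $A_n(x)$, and then invoke classical central and local limit theorems for sums of independent Bernoulli random variables. Specifically, $A_n(1) = \prod_{i=1}^n (1+r_i) > 0$, so
\[
\frac{A_n(x)}{A_n(1)} = \prod_{i=1}^n \frac{x+r_i}{1+r_i} = \prod_{i=1}^n\!\left(\frac{1}{1+r_i}\,x + \frac{r_i}{1+r_i}\right).
\]
Each factor is the probability generating function of a Bernoulli variable $X_i$ with $\mathbb{P}(X_i = 1) = 1/(1+r_i)$. Letting $X_1,\ldots,X_n$ be independent, one has $p(n,k) = \mathbb{P}(S_n = k)$ for $S_n = X_1 + \cdots + X_n$, and a direct computation yields $\mathbb{E}[S_n] = \mu_n$, $\mathrm{Var}(S_n) = \sigma_n^2$. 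Asymptotic normality of $a(n,k)$ is thereby reduced to a CLT and LLT for $S_n$.

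For \eqref{clt} I would verify Lyapunov's condition with exponent three. Since $|X_i - \mathbb{E}X_i| \le 1$ almost surely, $\mathbb{E}|X_i - \mathbb{E}X_i|^3 \le \mathrm{Var}(X_i)$, and therefore
\[
\frac{1}{\sigma_n^3}\sum_{i=1}^n \mathbb{E}|X_i - \mathbb{E}X_i|^3 \le \frac{\sigma_n^2}{\sigma_n^3} = \frac{1}{\sigma_n} \longrightarrow 0
\]
under the hypothesis $\sigma_n \to \infty$, so the Lyapunov CLT delivers \eqref{clt}.

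For \eqref{llt} I would use Fourier inversion for the integer-valued sum $S_n$. A routine computation gives $|\varphi_{X_i}(t)|^2 = 1 - 2\mathrm{Var}(X_i)(1-\cos t)$, whence the bound $1 - \cos t \ge 2t^2/\pi^2$ on $[-\pi,\pi]$ produces a Gaussian-type estimate $|\varphi_{S_n}(t)| \le \exp(-c\sigma_n^2 t^2)$ for some absolute $c > 0$ on that interval. Starting from
\[
\sigma_n\,p(n,k) = \frac{\sigma_n}{2\pi}\int_{-\pi}^{\pi} e^{-itk}\varphi_{S_n}(t)\,dt,
\]
the substitution $t = s/\sigma_n$ followed by the standard truncation-and-dominated-convergence argument (cf.\ Petrov's monograph) gives the uniform convergence \eqref{llt}.

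The CLT step is essentially automatic from the uniform boundedness of the $X_i$, so I expect the main obstacle to be the LLT step, specifically making the control of the Fourier integral uniform in $k \in \mathbb{Z}$. The Gaussian envelope on $|\varphi_{S_n}(t)|$ for $t$ bounded away from zero, combined with a quadratic Taylor expansion of $\log\varphi_{S_n}(s/\sigma_n)$ near $s = 0$, is exactly the place where the hypothesis $\sigma_n \to \infty$ is genuinely needed; elsewhere in the argument the bounded-support structure of the Bernoulli summands does all the work.
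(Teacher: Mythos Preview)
Your argument is correct and is precisely the classical Harper--Bender approach: represent the normalized coefficients as the distribution of a sum of independent Bernoulli variables via the real-rootedness, then apply Lyapunov for the CLT and a Fourier-inversion estimate for the LLT. Note, however, that the paper does not actually prove this lemma; it quotes it as a known criterion with references to Harper~\cite{Har67}, Bender~\cite[Theorem~2]{Ben73}, and Canfield~\cite[Example~3.4.2]{Can15}. So there is no ``paper's own proof'' to compare against---your sketch supplies exactly the standard proof that those references contain.

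One small remark on the LLT part: as you outline it the argument is fine, but be aware that Bender's formulation (which the paper cites) reaches the local limit theorem differently, deducing it from the CLT together with the log-concavity (unimodality) of the sequence $a(n,k)$, which follows from real-rootedness via Newton's inequalities. That route avoids the Fourier analysis entirely. Your characteristic-function approach is equally valid and arguably more direct; the log-concavity route has the advantage of being purely combinatorial once the CLT is in hand.
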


Let $G$ be a simple graph with $n$ vertices.
It is well known that the Laplacian matrix $L(G)$ is a positive semi-definite symmetric matrix.
Denote by $\lambda_i=\lambda_i(G)$ the $i$th largest eigenvalue of $L(G)$. Then
$$\lambda_1\ge\lambda_2\ge\cdots\ge\lambda_{n-1}\ge\lambda_n=0.$$
Thus
$$C(G;x)=\prod_{i=1}^n(x-\lambda_i),$$
and so $$\sum_{k=0}^nc(G,k)x^k=\prod_{i=1}^n(x+\lambda_i).$$
Therefore the associated mean and variance are
$$\mu_n=\sum_{i=1}^n\frac{1}{1+\lambda_i}$$
and
$$\sigma_n^2=\sum_{i=1}^n\frac{\lambda_i}{(1+\lambda_i)^2}.$$

\begin{exm}
Let $K_{1,n-1}$ be the star with $n$ vertices.
Then a simple evaluation yields $C(K_{1,n-1};x)=x(x-n)(x-1)^{n-2}$.
Thus the associated mean and variance are
$$\mu_n=\frac{1}{1+0}+\frac{1}{1+n}+(n-2)\frac{1}{1+1}=\frac{n^2+n+2}{2(n+1)}$$
and
$$\sigma_n^2=\frac{n}{(1+n)^2}+(n-2)\frac{1}{2^2}=\frac{(n-1)(n^2+n+2)}{4(n+1)^2}.$$
It follows that $c(K_{1,n-1},k)$ are asymptotically normal from Lemma \ref{rzv}.
\end{exm}

\begin{thm}\label{thm-dv}
Let $\{G_n\}_{n=1}^{\infty}$ be a sequence of simple graphs,
such that $|E(G_n)|$ increases with $n$,
and $\Delta^2(G_n)=o(|E(G_n)|)$.
Then the Laplacian coefficients $c(G_n,k)$ are asymptotically normal.
\end{thm}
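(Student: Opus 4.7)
The plan is to invoke Lemma~\ref{rzv} applied to the polynomial $A_n(x)=\sum_{k=0}^nc(G_n,k)x^k=\prod_{i=1}^n(x+\lambda_i(G_n))$, which has only nonnegative real zeros since $L(G_n)$ is positive semi-definite. All hypotheses of the lemma are then satisfied, so the only thing to verify is that the associated variance
$$\sigma_n^2=\sum_{i=1}^n\frac{\lambda_i(G_n)}{(1+\lambda_i(G_n))^2}$$
tends to infinity under the assumption $\Delta^2(G_n)=o(|E(G_n)|)$.

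To bound $\sigma_n^2$ from below, I would exploit two standard facts: first, $\sum_{i=1}^n\lambda_i(G_n)=\mathrm{tr}\,L(G_n)=2|E(G_n)|$; second, every Laplacian eigenvalue satisfies $\lambda_i(G_n)\le 2\Delta(G_n)$ (for instance, via the Anderson--Morley bound $\lambda_1\le\max_{uv\in E}\{d(u)+d(v)\}$). Consequently each factor $(1+\lambda_i(G_n))^2$ is at most $(1+2\Delta(G_n))^2$, so that
$$\sigma_n^2\ge\frac{1}{(1+2\Delta(G_n))^2}\sum_{i=1}^n\lambda_i(G_n)=\frac{2|E(G_n)|}{(1+2\Delta(G_n))^2}.$$

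Now I would finish by case analysis on whether $\Delta(G_n)$ stays bounded. If $\Delta(G_n)=O(1)$, then the denominator is bounded and the numerator $|E(G_n)|\to\infty$ by hypothesis, so $\sigma_n^2\to\infty$. If $\Delta(G_n)\to\infty$, then $(1+2\Delta(G_n))^2\sim 4\Delta^2(G_n)$, and the assumption $\Delta^2(G_n)=o(|E(G_n)|)$ still forces $\sigma_n^2\to\infty$. In both subcases Lemma~\ref{rzv} then delivers asymptotic normality of $c(G_n,k)$ (by both the central and the local limit theorem).

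The only nonroutine ingredient is the eigenvalue bound $\lambda_i\le 2\Delta$, and this is the step I would expect to be the main obstacle for a reader unfamiliar with Laplacian spectral theory; everything else is a one-line trace computation and the hypothesis. No control on the individual eigenvalues beyond this crude uniform bound is needed, because summing $\lambda_i$ already produces $2|E(G_n)|$, which is precisely the quantity the hypothesis compares to $\Delta^2(G_n)$.
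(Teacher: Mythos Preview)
Your proof is correct and follows essentially the same route as the paper: bound every Laplacian eigenvalue by $2\Delta$, use $\sum_i\lambda_i=2|E|$ to get $\sigma_n^2\ge 2|E(G_n)|/(1+2\Delta(G_n))^2$, and invoke Lemma~\ref{rzv}. The only cosmetic differences are that the paper cites Gershgorin's theorem (rather than Anderson--Morley) for $\lambda_i\le 2\Delta$ and skips your final case split; note that your dichotomy ``$\Delta(G_n)=O(1)$'' versus ``$\Delta(G_n)\to\infty$'' is not exhaustive, but the conclusion follows in one stroke from $(1+2\Delta)^2\le 9(1+\Delta^2)$ together with $|E(G_n)|\to\infty$ and $\Delta^2(G_n)=o(|E(G_n)|)$.
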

\begin{proof}
For any graph $G$,
we have
\begin{equation}\label{kel67}
\lambda_i(G)\le 2\Delta(G)
\end{equation}
by the famous Gershgorin circle theorem.
On the other hand, it is clear that
$$\sum_{i=1}^{|V(G)|}\lambda_i(G)=\mathrm{trace}(L(G))=\sum_{u\in V(G)}\deg(u)=2|E(G)|.$$
Hence
$$\sum_{i=1}^{|V(G_n)|}\frac{\lambda_i(G_n)}{(1+\lambda_i(G_n))^2}
\ge \frac{1}{(1+2\Delta(G_n))^2}\sum_{i=1}^{|V(G_n)|}\lambda_i(G_n)
=\frac{2|E(G_n)|}{(1+2\Delta(G_n))^2},$$
and so
$$\sum_{i=1}^{|V(G_n)|}\frac{\lambda_i(G_n)}{(1+\lambda_i(G_n))^2}\rightarrow \infty$$
by the hypothesis $\Delta^2(G_n)=o(|E(G_n)|)$.
It follows that $c(G_n,k)$ are asymptotically normal from Lemma \ref{rzv}.
This completes the proof of the theorem.
\end{proof}

\begin{rem}\label{kc74}
For any graph $G$, the largest Laplacian eigenvalue satisfies
$$\lambda_1(G)\le\max\{\deg(u)+\deg(v): uv\in E(G)\}$$ (see \cite{AM85,KC74} for instance).
The inequality \eqref{kel67} is an immediate consequence of this inequality.
\end{rem}

\begin{exm}
Let $P_n$ be the path with $n$ vertices.
It is well known that
$$C(P_n;x)=\prod_{j=0}^{n-1}\left(x-4\sin^2\frac{j\pi}{2n}\right)$$
(see, e.g., \cite[\S 1.4.4]{BH12}).
Thus the associated mean $\mu_n$ and variance $\sigma^2_n$ satisfy
$$\frac{\mu_n}{n}=\frac{1}{n}\sum_{j=0}^{n-1}\frac{1}{1+4\sin^2\frac{j\pi}{2n}}
\rightarrow \frac{1}{\pi}\int_{0}^{\frac{\pi}{2}}\frac{1}{1+4\sin^2\theta}d\theta=\frac{1}{2\sqrt{5}}$$
and
$$\frac{\sigma^2_n}{n}=\frac{1}{n}\sum_{j=0}^{n-1}\frac{4\sin^2\frac{j\pi}{2n}}{\left(1+4\sin^2\frac{j\pi}{2n}\right)^2}
\rightarrow \frac{1}{\pi}\int_{0}^{\frac{\pi}{2}}\frac{4\sin^2\theta}{\left(1+4\sin^2\theta\right)^2}d\theta=\frac{1}{5\sqrt{5}}.$$
Clearly, $\Delta(P_n)\le 2$.
It follows from Theorem \ref{thm-dv} that the Laplacian coefficients $c(P_n,k)$ are asymptotically normal
with mean $\mu_n\sim\frac{n}{2\sqrt{5}}$ and variance $\sigma_n^2\sim\frac{n}{5\sqrt{5}}$.
\end{exm}

\begin{exm}
More general, let $(G_n)_{n\ge 0}$ be a sequence of (rooted) binary trees,
such that $|V(G_n)|$ increases with $n$.
Then the Laplacian coefficients $c(G_n,k)$ are asymptotically normal since $\Delta(G_n)\le 3$.
\end{exm}

The following corollary is an immediate consequence of Theorem \ref{thm-dv}.

\begin{coro}\label{thm-d}
Let $\{G_n\}_{n=1}^{\infty}$ be a sequence of graphs,
each regular of degree $d$,
such that $|V(G_n)|$ increases with $n$.
Then the Laplacian coefficients $c(G_n,k)$ are asymptotically normal.
\end{coro}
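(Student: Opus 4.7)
The plan is to verify that any sequence of $d$-regular graphs with growing vertex set automatically meets the hypothesis $\Delta^2(G_n)=o(|E(G_n)|)$ of Theorem \ref{thm-dv}, and then invoke that theorem directly. Since the corollary is explicitly billed as ``an immediate consequence'' of Theorem \ref{thm-dv}, the argument should be very short and there is no genuine obstacle; the work is just checking the two quantities.

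First I would compute $\Delta(G_n)$ and $|E(G_n)|$ in terms of $d$ and $|V(G_n)|$. Because each $G_n$ is $d$-regular, every vertex has degree exactly $d$, so $\Delta(G_n)=d$ and, by the handshake lemma,
\[
|E(G_n)|=\frac{1}{2}\sum_{u\in V(G_n)}\deg(u)=\frac{d\,|V(G_n)|}{2}.
\]
In particular $|E(G_n)|$ grows linearly in $|V(G_n)|$, hence increases with $n$ by hypothesis.

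Next I would check the smallness condition on the maximum degree. Since $\Delta^2(G_n)=d^2$ is a constant while $|E(G_n)|=d|V(G_n)|/2\to\infty$, we have
\[
\frac{\Delta^2(G_n)}{|E(G_n)|}=\frac{d^2}{d|V(G_n)|/2}=\frac{2d}{|V(G_n)|}\longrightarrow 0,
\]
so $\Delta^2(G_n)=o(|E(G_n)|)$.

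Finally I would apply Theorem \ref{thm-dv} to the sequence $\{G_n\}$ to conclude that the Laplacian coefficients $c(G_n,k)$ are asymptotically normal, completing the proof. The only thing to watch is the degenerate case $d=0$, in which $G_n$ is an empty graph and $c(G_n,k)$ is trivial, so the interesting content is for $d\ge 1$; that case is covered by the computation above.
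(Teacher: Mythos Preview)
Your argument is correct and is precisely the intended one: the paper states the corollary as ``an immediate consequence'' of Theorem~\ref{thm-dv} without further proof, and your verification that $\Delta(G_n)=d$ and $|E(G_n)|=d|V(G_n)|/2$ yields $\Delta^2(G_n)=o(|E(G_n)|)$ is exactly what makes the application go through.
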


\begin{exm}
Let $G_n=nK_2$ be the disjoint union of $n$ copies of $K_2$.
Then $G_n$ is $1$-regular
and $$C(G_n;x)=x^n(x-2)^n.$$
It follows that $c(G_n,k)$ are asymptotically normal from Corollary~\ref{thm-d}.
This result is a special case of the classic de Moivre-Laplace central limit theorem.
\end{exm}

\begin{exm}
Let $C_n$ be the cycle with $n$ vertices.
Then $C_n$ is $2$-regular
and
\begin{equation}\label{clp}
C(C_n;x)=\prod_{j=0}^{n-1}\left(x-4\sin^2\frac{j\pi}{n}\right)
\end{equation}
(see, e.g., \cite[\S 1.4.3]{BH12}).
The associated mean and variance are
$$\mu_n=\sum_{j=0}^{n-1}\frac{1}{1+4\sin^2\frac{j\pi}{n}}
\rightarrow \frac{n}{\pi}\int_{0}^{\pi}\frac{1}{1+4\sin^2\theta}d\theta=\frac{n}{\sqrt{5}}$$
and
$$\sigma^2_n=\sum_{j=0}^{n-1}\frac{4\sin^2\frac{j\pi}{n}}{\left(1+4\sin^2\frac{j\pi}{n}\right)^2}
\rightarrow \frac{n}{\pi}\int_{0}^{\pi}\frac{4\sin^2\theta}{\left(1+4\sin^2\theta\right)^2}d\theta=\frac{2n}{5\sqrt{5}}.$$
Thus the Laplacian coefficients $c(C_n,k)$ are asymptotically normal with mean $\mu_n$ and variance $\sigma_n^2$ by Corollary~\ref{thm-d}.
\end{exm}

\begin{exm}
A cubic graph is a $3$-regular graph.
Many important graphs are cubic,
including interesting and mysterious snarks.
A snark is a connected, bridgeless cubic graph with chromatic index equal to $4$.
The first known snark was the Petersen graph.
Tutte conjectured that
every snark has a subgraph that can be formed from the Petersen graph by subdividing some of its edges
(this conjecture is a strengthened form of the Four-Color Theorem).
One often encounters snarks
in the study of various important and difficult problems in graph theory.
For example, the Four-Color Theorem is equivalent to the statement that no snark is planar.
Let $(G_n)_{n\ge 0}$ be a sequence of cubic graphs such that $|V(G_n)|$ increases with $n$.
Then $c(G_n,k)$ are asymptotically normal by Corollary~\ref{thm-d}.
\end{exm}

The {\it join} $G_1\nabla G_2$ of (disjoint) graphs $G_1$ and $G_2$ is the graph
obtained from $G_1\cup G_2$ by joining each vertex of $G_1$ to each vertex of $G_2$.
The graph $\widehat{G}:=G\nabla K_1$ is called {\it the cone} over the graph $G$.
It is well known that
\begin{equation*}\label{crs719}
C(G_1\nabla G_2;x)=\frac{x-n_1-n_2}{(x-n_1)(x-n_2)}C(G_1;x-n_2)C(G_2;x-n_1),
\end{equation*}
where $n_i=|V(G_i)|$ (see, e.g., \cite[Theorem 7.1.9]{CRS10}).
In particular, if $|V(G)|=n$ and $C(G;x)=x\prod_{i=1}^{n-1}(x-\lambda_i)$, then
\begin{equation}\label{cone}
C(\widehat{G};x)=x(x-n-1)\prod_{i=1}^{n-1}(x-1-\lambda_i).
\end{equation}

\begin{coro}\label{thm-dc}
Let $\{G_n\}_{n=1}^{\infty}$ be a sequence of graphs,
each regular of degree $d$,
such that $|V(G_n)|$ increases with $n$.
Then the Laplacian coefficients $c(\widehat{G_n},k)$ are asymptotically normal.
\end{coro}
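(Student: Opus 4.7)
The plan is to apply Lemma \ref{rzv} directly to $C(\widehat{G_n}; x)$. Let $v_n = |V(G_n)|$ and let $\lambda_1, \ldots, \lambda_{v_n-1}$ denote the nonzero Laplacian eigenvalues of $G_n$. By formula \eqref{cone}, the Laplacian spectrum of $\widehat{G_n}$ consists of $0$, $v_n + 1$, and the shifted eigenvalues $1 + \lambda_i$ for $i = 1, \ldots, v_n - 1$. The only thing to verify in order to invoke Lemma \ref{rzv} is that the associated variance $\sigma_n^2$ diverges.

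First I would observe that since $G_n$ is $d$-regular, inequality \eqref{kel67} bounds each $\lambda_i$ above by $2d$, so every shifted eigenvalue lies in the fixed interval $[1, 2d + 1]$. Separating the two exceptional eigenvalues gives
$$\sigma_n^2 = \frac{v_n + 1}{(v_n + 2)^2} + \sum_{i=1}^{v_n - 1} \frac{1 + \lambda_i}{(2 + \lambda_i)^2},$$
in which the first term is $O(1/v_n)$ and may be ignored. For the sum, the function $t \mapsto t/(1+t)^2$ is bounded below by $1/(2d+2)^2$ on $[1, 2d+1]$, hence $\sigma_n^2 \ge (v_n - 1)/(2d+2)^2$. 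Since $v_n \to \infty$ by hypothesis, $\sigma_n \to \infty$, and Lemma \ref{rzv} delivers the conclusion.

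No serious obstacle is anticipated. It is worth noting that Theorem \ref{thm-dv} cannot be applied directly to the cone: the apex vertex has degree $v_n$, which is comparable to $|E(\widehat{G_n})| = dv_n/2 + v_n$, so the hypothesis $\Delta^2(\widehat{G_n}) = o(|E(\widehat{G_n})|)$ fails. The content of the corollary is exactly that this single high-degree vertex introduces only one large eigenvalue $v_n + 1$, which is suppressed by the denominator $(v_n+2)^2$ in the variance, while the remaining bulk of the spectrum retains the bounded-degree behavior that drives Corollary \ref{thm-d}.
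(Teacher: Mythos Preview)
Your proposal is correct and follows essentially the same route as the paper: both arguments read off the spectrum of $\widehat{G_n}$ from \eqref{cone}, discard the contribution of the large eigenvalue $v_n+1$, and bound each remaining term $\frac{1+\lambda_i}{(2+\lambda_i)^2}$ from below using $\lambda_i\le 2d$ to force $\sigma_n^2\to\infty$. The only cosmetic difference is that the paper records the sharper constant $\frac{2d+1}{(2d+2)^2}$ (via monotonicity of $\lambda\mapsto\frac{1+\lambda}{(2+\lambda)^2}$) where you use $\frac{1}{(2d+2)^2}$; both suffice, and your closing remark explaining why Theorem~\ref{thm-dv} does not apply directly is a welcome addition.
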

\begin{proof}
Let the Laplacian spectrum of $G_n$ be $\lambda_1\,\ldots,\lambda_{N_n-1},\lambda_{N_n}=0$,
where $N_n=|V(G_n)|$.
Then by \eqref{cone},
the Laplacian spectrum of $\widehat{G_n}$ is $0,1+N_n,1+\lambda_1,\ldots,1+\lambda_{N_n-1}$.
Thus
$$\sigma_{N_n}^2(\widehat{G_n})
=\frac{0}{(0+1)^2}+\frac{1+N_n}{(2+N_n)^2}+\sum_{i=1}^{N_n-1}\frac{1+\lambda_i}{(2+\lambda_i)^2}.$$
Note that the function $\frac{1+\lambda}{(2+\lambda)^2}$ is monotonically decreasing for $\lambda\ge 0$
and that $\lambda_i\le 2d$ by Remark \ref{kc74}.
Hence
$$\sum_{i=1}^{N_n-1}\frac{1+\lambda_i}{(2+\lambda_i)^2}\ge (N_n-1)\frac{1+2d}{(2+2d)^2}.$$
Thus $\sigma_{N_n}^2(\widehat{G_n})\rightarrow\infty$ as $n\rightarrow\infty$,
and the Laplacian coefficients $c(\widehat{G_n},k)$ are therefore asymptotically normal by Lemma \ref{rzv}.
\end{proof}

\begin{exm}
Let $W_n$ be the wheels with $n+1$ vertices.
Then $W_n$ can be viewed as the cone over the cycle $C_n$.
Thus the Laplacian coefficients $c(W_n,k)$ are asymptotically normal by Corollary \ref{thm-dc}.
\end{exm}

\begin{exm}
The hypercube $Q_n$ is the Cartesian product of $n$ factors $K_2$.
Clearly, $Q_n$ is $n$-regular, with $2^n$ vertices and $n2^{n-1}$ edges.
It follows that the Laplacian coefficients $c(Q_n,k)$ are asymptotically normal from Theorem \ref{thm-dv} since $n=o(2^{n-1})$.

On the other hands,
note that the spectrum of $Q_n$ consists of the eigenvalues $n-2k$ with multiplicity $\binom{n}{k}$
(see, e.g., \cite[\S 1.4.6]{BH12}).
Hence the Laplacian spectrum of $Q_n$ consists of the eigenvalues $2k$ with multiplicity $\binom{n}{k}$.
Thus the associated variance is
$$\sum_{k=1}^n\frac{2k\binom{n}{k}}{(1+2k)^2}
\ge\sum_{k=1}^n\frac{2n\binom{n}{k}}{(1+2n)^2}
=\frac{2n(2^n-1)}{(1+2n)^2}\rightarrow\infty,$$
and the asymptotic normality of $c(Q_n,k)$ follows directly from Lemma~\ref{rzv}.
\end{exm}

\begin{rem}
The reader may wonder if there does exist a non-trivial sequence $G_n$ of graphs
for which the numbers $c(G_n,k)$ are not normally distributed.
Actually, if we take $G_n$ to be the complete graph $K_n$ with $n$ vertices,
then $C(K_n;x)=x(x-n)^{n-1}$,
and so $c(K_n,k)=n^{n-k}\binom{n-1}{k-1}$.
Thus we have
$$\frac{c(K_n,k)}{\sum_{j=1}^{n}c(K_n,j)}=\frac{n^{n-k}\binom{n-1}{k-1}}{(n+1)^{n-1}}\sim \frac{e^{-1}}{(k-1)!}$$
by Stirling's approximation for factorials.
Therefore the Laplacian coefficients $(c(K_n,k))_{k\ge 1}$ of the complete graphs $K_n$ are asymptotically Poisson distributed with the mean and variance $1$.
Similarly, we may show that the Laplacian coefficients of the complete bipartite graphs $K_{n,n}$
are asymptotically Poisson distributed with the mean and variance $2$.
\end{rem}

\section{Further work}

Laplacian coefficients are closely related to matching numbers, especially for trees (see \cite{ZG08} for instance). 
Let $S(G)$ denote the subdivision of a graph $G$, obtained by inserting a new vertex on each edge of $G$.
Zhou and Gutman \cite{ZG08} showed that
$c(T,k)=m(S(T),k)$
for every acyclic graph $T$ with $n$ vertices and $1\le k\le n$.
Using this correspondence,
they also showed that for any tree $T_n$ with $n$ vertices,
$$c(K_{1,n-1},k)\le c(T_n,k)\le c(P_n,k),\quad k=1,2,\ldots,n.$$
See Mohar \cite{Moh07} for a strengthening of this result.
We have seen that Laplacian coefficients for the stars, the paths and the binary trees are asymptotically normal respectively.
It is possible that Laplacian coefficients for the general trees are also asymptotically normal.

Another interesting topic is asymptotic normality of the signless Laplacian coefficients.
Let $Q(G)=D(G)+A(G)$ be the signless Laplacian matrix of a graph $G$
and $Q(G;x)=\det(xI-Q(G))$ the signless Laplacian characteristic polynomial.
Then $Q(G)$ is also a positive semi-definite matrix.
Let $Q(G;x)=\sum_{k=0}^n(-1)^{n-k}q(n,k)x^{k}$.
Then the signless Laplacian coefficients $q(G,k)$ may be interpreted in terms of TU-subgraphs of $G$
(see \cite{CRS07} for details).
The method of proof used in Theorem \ref{thm-dv} can be carried over verbatim to the signless Laplacian coefficients.
We leave it and related problems to the interested reader.

\section*{Acknowledgement}

This work was supported in part by the National Natural Science Foundation of China (Nos. 11371078, 11571150).
The authors thank the anonymous referee for his/her careful reading and helpful comments.

\section*{References}


\begin{thebibliography}{00}
\bibitem{AM85}W.N. Anderson, T.D. Morley,
Eigenvalues of the Laplacian of a graph,
Linear Multilin. Algebra 18 (1985) 141--145.
\bibitem{Ben73}E.A. Bender,
Central and local limit theorems applied to asymptotic enumeration,
J. Combin. Theory Ser. A 15 (1973) 91--111.
\bibitem{BH12}A.E. Brouwer, W.H. Haemers,
Spectra of Graphs, Springer, New York, 2012.
\bibitem{Can15}E.R. Canfield,
Asymptotic normality in enumeration,
Handbook of enumerative combinatorics,  255--280,
Discrete Math. Appl. (Boca Raton), CRC Press, Boca Raton, FL, 2015.
\bibitem{CRS07}D.M. Cvetkovi\'c, P. Rowlinson, S.K. Simi\'c,
Signless Laplacian of finite graphs,
Linear Algebra Appl. 423 (2007) 155--171.
\bibitem{CRS10}D.M. Cvetkovi\'c, P. Rowlinson, S.K. Simi\'c,
An Introduction to the Theory of Graph Spectra,
Cambridge University Press, Cambridge, 2010.
\bibitem{DEG01}A.A. Dobrynin, R. Entringer, I. Gutman,
Wiener index of trees: Theory and applications,
Acta Appl. Math. 66 (2001) 211--249.
\bibitem{God81}C. D. Godsil,
Matching behavior is asymptotically normal,
Combinatorica  1 (1981) 369--376.
\bibitem{Har67}L.H. Harper,
Stirling behavior is asymptotically normal,
Ann. Math. Statist. 38 (1967) 410--414.
\bibitem{JYZ16}Y.-L. Jin, Y.-N. Yeh, X.-D. Zhang,
Laplacian coefficient, matching polynomial and incidence energe of trees with described maximum degree,
J. Comb. Optim. 31 (2016) 1345--1372.
\bibitem{Kah00}J. Kahn,
A normal law for matchings,
Combinatorica 20 (2000) 339--391.
\bibitem{KC74}A.K. Kelmans, V.M. Chelnokov,
A certain polynomial of a graph and graphs with extremal number of trees,
J. Combin. Theory Ser. B 16 (1974) 197--214.
\bibitem{LPRS16}J.L. Lebowitz, B. Pittel, D. Ruelle, E.R. Speer,
Central limit theorems, Lee-Yang zeros, and graph-counting polynomials,
J. Combin. Theory Ser. A  141  (2016) 147--183.
\bibitem{Moh07}B. Mohar,
On the Laplacian coefficients of acyclic graphs,
Linear Algebra Appl. 422 (2007) 736--741.
\bibitem{Ruc84}A. Ruci\'nski,
The behaviour of $\binom{n}{k,\ldots,k,n-ik}c^i/i!$ is asymptotically normal,
Discrete Math. 49 (1984) 287--290.
\bibitem{YY06}W.-G. Yan, Y.-N. Yeh,
Connections between Wiener index and matching,
J. Math. Chem. 39 (2006) 389--399.
\bibitem{Zha07}X.-D. Zhang,
The Laplacian eigenvalues of graphs: a survey,
In: Linear Algebra Research Advances, Editor: Gerald D. Ling, pp. 201--228, 2007. 
\bibitem{ZG08}B. Zhou, I. Gutman,
A connection between ordinary and Laplacian spectra of bipartite graphs,
Linear and Multilin. Algebra 56 (2008) 305--310.
\end{thebibliography}
\end{document}